\documentclass{amsart}
\usepackage{amssymb}
\usepackage{graphicx,overpic} 
\usepackage{color}

\usepackage[hidelinks,pdftex]{hyperref}


\AtBeginDocument{%
   \def\MR#1{}
}


\numberwithin{equation}{section}
\theoremstyle{plain}
\newtheorem{theorem}[equation]{Theorem}
\newtheorem{corollary}[equation]{Corollary}
\newtheorem{lemma}[equation]{Lemma}

\newtheorem*{namedtheorem}{\theoremname}
\newcommand{\theoremname}{testing}
\newenvironment{named}[1]{\renewcommand{\theoremname}{#1}\begin{namedtheorem}}{\end{namedtheorem}}

\theoremstyle{definition}

\newtheorem{question}[equation]{Question}

\newtheorem{claim}[equation]{Claim}

\usepackage{microtype}


\usepackage[dvipsnames]{xcolor}
\definecolor{MyCyan}{HTML}{00F9DE}
\usepackage{enumitem} 
\setlist{nolistsep,leftmargin=*}
\usepackage{transparent} 
\usepackage{mathrsfs}

\usepackage{marginnote}
\long\def\@savemarbox#1#2{\global\setbox#1\vtop{\hsize\marginparwidth 
  \@parboxrestore\tiny\raggedright #2}}
\marginparwidth 1.5in \marginparsep 7pt



\renewcommand{\setminus}{{\smallsetminus}}

\DeclareMathOperator{\PSL}{PSL}

\DeclareMathOperator{\Vol}{Vol}

\DeclareMathOperator{\Ker}{Ker}

\newcommand{\fun}[3]{#1 \colon #2 \to #3}


\title{Geometry and topology of closed geodesics complements in the 3-torus}

\author{Jos\'{e} Andr\'{e}s Rodr\'{\i}guez Migueles}
\address[]{Centro de Investigaci\'on en Matem\'aticas, GTO 36023, Mexico}
\email[] {jose.migueles@cimat.mx} 

\date{\today}

\begin{document}

\begin{abstract}
We show that for at most three closed geodesics with linearly independent directions, the homeomorphism type of its complement in the $3$-torus is determine by the orbit of their direction vectors subspaces under the action of $\PSL_3(\mathbb{Z}).$ Moreover, we provide asymptotically sharp volume bounds for a  family of closed geodesics complements. The bounds depend only on the distance in the Farey graph.
\end{abstract}

\maketitle

\section{Introduction}
Let $\mathbb{T}^3$ be the $3$-torus, this three-dimensional manifold admits an Euclidean structure obtained as the orbit space of $\mathbb{R}^3,$ under the discrete group $\mathbb{Z}^3$ acting as integer translations, the quotient map is denoted by $\mathcal{P}$ . Every closed geodesic $R$ is an embedding of $\mathbb{S}^1$ into $\mathbb{T}^3,$ so it can then be considered as a knot in $\mathbb{T}^3.$ 

The aim of this paper, is to study the geometry and topology of the complement of a finite collection a closed geodesics inside $\mathbb{T}^3.$ Hui and Purcell gave in~\cite{HuiPurcell:GeomRod} and~\cite{Hui:GeomClassificationRod}  an explicit characterization of the JSJ decomposition of this link complement, under the name of rod complements, by using Thurston’s geometrization theorem. A particular consequence is that each link complement with three or more linearly independent closed geodesics  is hyperbolic or has a unique hyperbolic piece in its JSJ decomposition. In the case when the link complement is hyperbolic, such metric is unique by the
Mostow’s Rigidity Theorem, meaning that any geometric invariant is a topological invariant.

In a recent paper~\cite{HuiPurcellDo:VolRodPackings}  Do, Hui and Purcell, obtained the first volume bounds for the hyperbolic closed geodesics complement in terms of rod parameters (see~\cite[Theorem~3.2]{HuiPurcellDo:VolRodPackings})  and they showed that these bounds may be loose in general. But in~\cite[Theorem~5.7]{HuiPurcellDo:VolRodPackings} they gave an asymptotically sharp volume bounds for a family of closed geodesics complement, in terms of the lengths of the continued fractions formed from the rod parameters.

In this paper, we give asymptotically sharp volume bounds for a bigger family than in~\cite[Theorem~5.7]{HuiPurcellDo:VolRodPackings} in terms of the Farey distance between pair of slopes coming from the projection of two components to two simple closed geodesics on a once-punctured $2$-torus with a non-complete flat metric..

\begin{theorem}\label{Thm:VolRod}
There exist a universal constant $K_1\geq 1,$ such that for at least three  disjoint closed geodesics $R_1, R_2,...,R_n$ in the $3$-torus where $R_n$ has a direction vector $(0,0,1)$, for $i<n,$ $R_i$ is the image under $\mathcal{P}$ of the geodesic $\{\bar{x}_i+(tp_i,tq_i,s_i)|t\in\mathbb{R}\}\subset\mathbb{R}^3$ with  $0<s_i<s_j<1$ if $i<j,$ $[{p_i}:{q_i}]\neq [{p_{i+1}}:{q_{i+1}}]$, for $i=1,2,...,n-2,$ and $[{p_1}:{q_1}]\neq [{p_{n-1}}:{q_{n-1}}]$.  Then 
{\small{$$ \frac{1}{2K_1}  \sum_{i=1}^{n-1} d_\mathcal{F}\left([{p_i}:{q_i}],[{p_{i+1}}:{q_{i+1}}]\right) \leq\Vol(\mathbb{T}^3\setminus \{R_i\}_{i=1}^n)\leq v_8\sum_{i=1}^{n-1} d_\mathcal{F}\left([{p_i}:{q_i}],[{p_{i+1}}:{q_{i+1}}]\right),$$}}
where $v_8$ is the volume of a regular ideal octahedra and $[{p_n}:{q_n}]:=[{p_1}:{q_1}]$.
\end{theorem}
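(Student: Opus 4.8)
The plan is to first reduce the problem to a link complement in a product manifold, and then treat the two inequalities by separate mechanisms: an explicit octahedral decomposition indexed by the Farey graph for the upper bound, and an essential-surface (guts) estimate for the lower bound.

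First I would carry out the topological reduction. Drilling the vertical geodesic $R_n$ of direction $(0,0,1)$ from $\mathbb{T}^3=\mathbb{T}^2\times\mathbb{S}^1$ turns the $\mathbb{T}^2$ factor into a once-punctured torus, so $\mathbb{T}^3\setminus R_n$ is homeomorphic to $\Sigma_{1,1}\times\mathbb{S}^1$, where the $\mathbb{S}^1$ records the $z$-coordinate and $\Sigma_{1,1}$ is the punctured horizontal torus. Under this identification each remaining $R_i$ sits in the level $\Sigma_{1,1}\times\{s_i\}$ as a simple closed geodesic $\gamma_i$ of slope $[p_i:q_i]$, and since $0<s_1<\dots<s_{n-1}<1$ the components occur at distinct, cyclically ordered heights. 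Thus $M:=\mathbb{T}^3\setminus\{R_i\}_{i=1}^n \cong (\Sigma_{1,1}\times\mathbb{S}^1)\setminus\bigcup_{i=1}^{n-1}\gamma_i$. Hyperbolicity of $M$ is guaranteed by the Hui--Purcell classification, so by Mostow rigidity $\Vol(M)$ is a topological invariant and it suffices to estimate it. The cyclic term $[p_n:q_n]:=[p_1:q_1]$ together with the hypothesis $[p_1:q_1]\neq[p_{n-1}:q_{n-1}]$ accounts for the ``wrap-around'' slab joining the top curve $\gamma_{n-1}$ back to the bottom curve $\gamma_1$ across the seam of the $\mathbb{S}^1$ factor, while $[p_i:q_i]\neq[p_{i+1}:q_{i+1}]$ ensures each consecutive Farey distance is at least $1$.

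For the upper bound I would build an ideal polyhedral decomposition adapted to the Farey graph, slab by slab. Cutting $M$ along level surfaces just below each $\gamma_i$, the $i$-th slab $\Sigma_{1,1}\times[s_i,s_{i+1}]$ carries $\gamma_i$ on its bottom face and $\gamma_{i+1}$ on its top face (indices cyclic), together with the vertical cusp coming from $R_n$. Choosing a geodesic edge-path $[p_i:q_i]=v_0,v_1,\dots,v_{d}=[p_{i+1}:q_{i+1}]$ in the Farey graph, where $d=d_\mathcal{F}([p_i:q_i],[p_{i+1}:q_{i+1}])$, each step $v_{j}\to v_{j+1}$ is a Farey flip that I would realize by a single Whitehead-type augmented piece, straightening to an ideal octahedron (the Whitehead/augmented local model has complete volume $v_8$). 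This decomposes the $i$-th slab into $d$ combinatorial octahedra, so $M$ decomposes into $\sum_{i=1}^{n-1} d_\mathcal{F}([p_i:q_i],[p_{i+1}:q_{i+1}])$ of them. Straightening in the complete hyperbolic metric and using that the regular ideal octahedron strictly maximizes volume among all ideal octahedra then gives $\Vol(M)\le v_8\sum_{i=1}^{n-1}d_\mathcal{F}([p_i:q_i],[p_{i+1}:q_{i+1}])$, the desired upper bound.

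For the lower bound --- the main obstacle --- I would invoke the Agol--Storm--Thurston inequality $\Vol(M)\ge \tfrac{v_8}{2}\,|\chi(\operatorname{guts}(M\cut S))|$ for a well-chosen essential surface $S$. The level punctured tori are incompressible but have bounded complexity, so they cannot see the growing distance; instead I would assemble $S$ from the spanning surfaces realizing the Farey paths (twisted bands threading consecutive $\gamma_i$ past the vertical cusp of $R_n$), engineered so that each Farey edge contributes a definite amount to $-\chi(\operatorname{guts})$. The hard part is precisely this guts estimate: after decomposing $M\cut S$ into its characteristic $I$-bundle and Seifert windows and its guts, one must show the guts complexity grows at least linearly in $\sum_i d_\mathcal{F}([p_i:q_i],[p_{i+1}:q_{i+1}])$, i.e.\ that the $I$-bundle part cannot absorb the contribution of the Farey flips. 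Controlling this, together with verifying that $S$ is boundary-incompressible and not an essential annulus across any slab, is where the bulk of the work lies; absorbing the resulting universal proportionality constant and the factor $\tfrac{v_8}{2}$ into $K_1$ yields $\Vol(M)\ge \tfrac{1}{2K_1}\sum_{i=1}^{n-1}d_\mathcal{F}([p_i:q_i],[p_{i+1}:q_{i+1}])$. A more coarse-geometric alternative would compare $\Vol(M)$ with Farey distance directly through a model-manifold or Dehn-filling argument, which likewise produces a universal multiplicative constant of the stated shape.
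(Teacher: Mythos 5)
Your upper bound argument has a genuine logical gap: you assert that $M$ itself decomposes into $\sum_i d_\mathcal{F}\left([{p_i}:{q_i}],[{p_{i+1}}:{q_{i+1}}]\right)$ combinatorial ideal octahedra and then straighten them, but the octahedral pieces you describe are \emph{augmented} (Whitehead-type) pieces, and each Farey flip contributes an octahedron only after the corresponding intermediate curve has been drilled out of the slab. The manifold that is actually a union of $\sum_i d_\mathcal{F}$ regular ideal octahedra is not $M$ but the link complement obtained from $M$ by further removing, in each slab, the closed geodesics whose slopes are the interior vertices of a Farey geodesic from $[{p_i}:{q_i}]$ to $[{p_{i+1}}:{q_{i+1}}]$; this is exactly what the paper does, citing~\cite[Theorem~4.2]{PurcelTaliRodriguez:ArithmeticModularMKnot}. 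The bridge back from that drilled manifold to $M$ is Thurston's theorem that Dehn filling does not increase volume~\cite[Proposition~6.5.2]{Thurston:Geom&TopOf3Mfd} --- a step you mention only as an aside at the very end, in connection with the lower bound, and never apply to the upper bound. Without either that Dehn filling step, or an actual construction of a topological ideal octahedral decomposition of $M$ itself (which you do not give and which is not what the cited octahedral structures provide), the inequality $\Vol(M)\leq v_8\sum_i d_\mathcal{F}$ does not follow from what you wrote.

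The lower bound is a larger gap. You propose an Agol--Storm--Thurston guts estimate for a surface assembled from spanning surfaces, and you yourself flag the decisive step --- that the guts complexity of $M$ cut along this surface grows linearly in $\sum_i d_\mathcal{F}$, i.e.\ that the characteristic $I$-bundle cannot absorb the Farey flips --- as ``where the bulk of the work lies.'' That step \emph{is} the theorem; leaving it unproven leaves the lower bound unproven, and establishing it would be a substantial research project rather than a verification. The paper takes a different and much shorter route: the levels $z=s_i$ exhibit $\{R_i\}_{i<n}$ as a \emph{stratified} link in $\mathbb{T}^3\setminus R_n$ in the sense of~\cite[Definition~1]{CRMY}, so~\cite[Theorem~E]{CRMY} applies directly and gives $\frac{1}{2K_1}\sum_i d_\mathcal{F}(P_{X_i},P_{Y_{i+1}})\leq\Vol(M)$, where the $P$'s are Bers pants decompositions of the stratifying once-punctured tori; since a pants decomposition of a once-punctured torus consists of a single curve, forced here to have slope $[{p_i}:{q_i}]$, the pants distance coincides with the Farey distance and the stated bound follows. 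So your proposal identifies the correct objects (the product structure on $\mathbb{T}^3\setminus R_n$, the Farey paths, the constant $v_8$), but both inequalities are missing their key justifications: Dehn filling monotonicity for the upper bound, and the stratification theorem (or a completed guts estimate in its place) for the lower bound.
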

Although the upper bound is explicit in general, and sharp when the closed geodesis $R_1,...,R_{n-1}$ is a collection of Farey neighbors  (see~\cite[Section~4]{PurcelTaliRodriguez:ArithmeticModularMKnot} and~\cite[Corollary~7.3]{PurcelTaliRodriguez:ArithmeticModularMKnot}) our lower bound depends on a non-explicit universal positive constant. This makes difficult to compare the bounds in~\cite[Theorem~5.7]{HuiPurcellDo:VolRodPackings} with respect Theorem~\ref{Thm:VolRod}. However we can relate the invariant of the lengths of the continued fractions of the slope of the closed geodesic with respect the Farey distance between infinity and the corresponding slope, because they are the same. For that reason, we can conclude by triangle inequality that our upper bound is sharper in general and equal for some special cases.

Furthermore, we also investigate the question of the characterization on the topological type of the closed geodesics complement in the following new result which is the three-dimensional and at most three linearly independent component version of~\cite[Proposition~2.5]{HuiPurcellDo:VolRodPackings}:

\begin{theorem}\label{Thm:MainHomeotTypeClassification3Rod}
Let $\{R_i\}_{i=1}^k$ and $\{R'_i\}_{i=1}^k$ be a pair of $k\leq 3$  disjoint linearly independent closed geodesics in the 3-torus. Then $$\mathbb{T}^3\setminus \{R_i\}_{i=1}^k\cong \mathbb{T}^3\setminus \{R'_i\}_{i=1}^k,$$ if and only if there exist $A\in \PSL_3(\mathbb{Z})$ such that maps the subspace generated by the direction vector of $R_i$ to the subspace generated by $R'_i$ for all $i.$
\end{theorem}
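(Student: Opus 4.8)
The plan is to prove the two implications separately, using throughout the classical identification of the mapping class group $\operatorname{MCG}(\mathbb{T}^3)$ with $\mathrm{GL}_3(\mathbb{Z})$ through its action on $H_1(\mathbb{T}^3)\cong\mathbb{Z}^3$ (valid because $\mathbb{T}^3$ is Haken and aspherical with abelian $\pi_1$, so $\operatorname{MCG}(\mathbb{T}^3)=\operatorname{Aut}(\mathbb{Z}^3)$). The key elementary observation is that an embedded closed geodesic $R_i$ points in a primitive direction $v_i$ and represents the primitive class $v_i\in H_1(\mathbb{T}^3)$, so that the direction subspace $\langle v_i\rangle$ is precisely the line spanned by $[R_i]$; moreover the affine self-diffeomorphism $f_A$ of $\mathbb{T}^3$ induced by a matrix $A\in\mathrm{GL}_3(\mathbb{Z})$ sends $R_i$ to a geodesic in direction $Av_i$ and hence sends $\langle v_i\rangle$ to $\langle Av_i\rangle$. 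With this dictionary in place, the forward implication becomes the problem of extracting an integral linear matching of the direction lines from a homeomorphism of complements, and the reverse implication becomes the problem of upgrading such an algebraic matching to an ambient homeomorphism of pairs.

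For the reverse (``if'') implication I would proceed as follows. Given $A\in\PSL_3(\mathbb{Z})$ carrying each $\langle v_i\rangle$ to $\langle v_i'\rangle$, the induced affine diffeomorphism $f_A$ takes $R_i$ to a closed geodesic parallel to $R_i'$. It then remains to prove the rigidity lemma that any two collections of at most three pairwise disjoint closed geodesics sharing the same linearly independent set of direction lines are ambient isotopic in $\mathbb{T}^3$. I would prove this by placing the components one at a time: two geodesics with a common primitive direction differ by a translation of $\mathbb{T}^3$, which is isotopic to the identity, so the first component is matched outright, and each subsequent component is slid into place within the complement of those already placed. To control these slides I would parametrize the parallel geodesics in a fixed primitive direction $v$ by the quotient $2$-torus $\mathbb{R}^3/(\mathbb{Z}^3+\mathbb{R}v)$, in which the previously placed components appear as closed geodesics of pairwise distinct slopes; the role of the linear independence hypothesis is exactly to force these slopes to be distinct and the relevant configuration space of disjoint placements to be connected, so that the slide exists. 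Composing these isotopies with $f_A$ yields a homeomorphism of pairs $(\mathbb{T}^3,\{R_i\})\to(\mathbb{T}^3,\{R_i'\})$, and in particular a homeomorphism of complements.

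For the forward (``only if'') implication, suppose $\phi\colon\mathbb{T}^3\setminus\{R_i\}\to\mathbb{T}^3\setminus\{R_i'\}$ is a homeomorphism. The crux is to show that $\phi$ is induced by a homeomorphism of pairs $\hat\phi\colon(\mathbb{T}^3,\{R_i\})\to(\mathbb{T}^3,\{R_i'\})$. Granting this, $\hat\phi$ represents a class in $\operatorname{MCG}(\mathbb{T}^3)=\mathrm{GL}_3(\mathbb{Z})$ whose action $A$ on $H_1(\mathbb{T}^3)$ sends each primitive class $v_i=[R_i]$ to $\pm[R_i']$ (after indexing the two families compatibly with the bijection of ends induced by $\phi$), and therefore sends $\langle v_i\rangle$ to $\langle v_i'\rangle$. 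Recording $A$ as the required element of $\PSL_3(\mathbb{Z})$ acting on the (unoriented) direction lines then finishes this direction; the distinction between $\SL_3(\mathbb{Z})$, $\PSL_3(\mathbb{Z})$ and $\mathrm{GL}_3(\mathbb{Z})$ here is only a matter of orientation conventions and is immaterial to the matching of lines.

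The main obstacle, and where essentially all of the geometric content lives, is producing the extension $\hat\phi$: one must show that $\phi$ carries the meridian slope of each $R_i$ to the meridian slope of the corresponding $R_i'$, so that it extends across the solid-torus Dehn fillings that recover $\mathbb{T}^3$. The plan is to recover the meridians intrinsically from the topology of the two exteriors by invoking the explicit JSJ description of closed-geodesic (rod) complements due to Hui and Purcell: for $k\le 3$ linearly independent components the exterior is either hyperbolic or has a unique hyperbolic piece in its JSJ decomposition, and $\phi$ must preserve this decomposition, send peripheral tori to peripheral tori, and---by Mostow rigidity on the hyperbolic piece together with the combinatorics of any graph-manifold pieces---be determined up to isotopy on the cusps. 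The meridian is then singled out as the unique filling slope producing the flat manifold $\mathbb{T}^3$, and the step I expect to be genuinely delicate is establishing that among all Dehn fillings of the exterior only the meridian fillings yield $\mathbb{T}^3$ (a closed orientable flat $3$-manifold with fundamental group $\mathbb{Z}^3$); this uniqueness is what forces $\phi$ to match meridians and hence to extend. Once the meridians correspond, $\phi$ extends over the fillings to the desired $\hat\phi$, completing the proof.
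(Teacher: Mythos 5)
Your ``if'' direction is essentially the paper's own argument (apply the affine homeomorphism induced by $A$, then match components by translations and linear isotopies), and your reduction of the ``only if'' direction to meridian preservation --- so that $\phi$ extends over the filling solid tori and then acts on $H_1(\mathbb{T}^3)$ as the desired matrix --- is also the paper's strategy. The problem is the mechanism you propose for meridian preservation. You invoke the JSJ decomposition, Mostow rigidity, and the claim that among all Dehn fillings of the exterior only the meridian fillings yield $\mathbb{T}^3$, and you yourself flag that last claim as the ``genuinely delicate'' step without proving it. That is a genuine gap: this is a cosmetic-surgery-type uniqueness statement, and neither JSJ theory nor Mostow rigidity supplies it (rigidity controls the geometry of the exterior, not which closed manifolds arise from its fillings). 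Since the entire forward implication rests on this unestablished assertion, the proof as written is incomplete.

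The paper closes exactly this gap by an elementary homological characterization of the meridian, which makes the geometrization machinery unnecessary. Namely, $\Ker(H_1(i_{R_i}))$, the kernel of the inclusion-induced map $H_1(\partial\mathcal{N}_{R_i};\mathbb{R})\to H_1(\mathbb{T}^3\setminus\{R_j\}_{j=1}^k;\mathbb{R})$, is precisely the line spanned by $[m_{R_i}]$: the longitude survives because it maps to $[R_i]\neq 0$ in $H_1(\mathbb{T}^3;\mathbb{R})$, while the meridian dies because the totally geodesic $2$-torus $T_i$ parallel to the plane spanned by the direction vectors of the \emph{other} components (this is exactly where linear independence and $k\leq 3$ are used) misses those components and meets $R_i$ transversely in $n\neq 0$ points, so that by Mayer--Vietoris $\widehat\delta([T_i])=n[m_{R_i}]$. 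Any homeomorphism of exteriors preserves these kernels and carries essential simple closed curves to essential simple closed curves, hence matches meridians up to sign, and the extension follows. Note also that if you insist on your filling-uniqueness route, its natural proof is this same computation: since $H_1$ of the exterior has rank $3$, a filling has $H_1\cong\mathbb{Z}^3$ only if every filling slope lies in $\Ker(H_1(i_{R_i}))$, i.e.\ is the meridian --- so your approach, once completed, collapses to the paper's argument anyway.
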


\subsection{Acknowledgements} I was  greatly benefited with conversations from  Araceli Guzmán-Tristán, Connie On Hui, Jessica Purcell and Jesús Rodríguez-Viorato.
I also thank CIMAT and my first semester students in the Geometry class for creating an attractive mathematical environment.
\section{Preliminaries}
Let $\mathbb{T}^3$ be the $3$-torus obtained by gluing identically the opposite faces
of the unit cube $[0, 1]\times[0, 1] \times [0, 1]$ in the 3-dimensional Euclidean space. Equivalently, we can see  $\mathbb{T}^3$ as the orbit space of $\mathbb{R}^3,$ under the discrete group $\mathbb{Z}^3$ acting as integer translations. This quotient map $\fun{\mathcal{P}}{\mathbb{R}^3}{\mathbb{T}^3}$ is also a Universal Riemannian covering map (see Definition 2.1 in~\cite{HuiPurcell:GeomRod} for an explicit map). Then $\mathbb{T}^3$  inherits the Euclidean metric from $\mathbb{R}^3.$ 

In this section, we fix a parametrization for closed geodesics and totally geodesic $2$-torus inside the $3$-torus, following  ideas in~\cite{HuiPurcell:GeomRod} and~\cite{Hui:GeomClassificationRod}, but instead of calling our links rods we called them closed geodesics.
\subsection{Closed geodesics in the $3$-torus}
Recall that a geodesic in the three-dimensional Euclidean space can be given by a parametric expression of the form:

$$\{\bar{p}+t\vec{d}\,|\,t\in\mathbb{R}\}$$

where $\bar{p}\in\mathbb{R}^3$ and $\vec{d}\in\mathbb{R}^3\setminus\{(0,0,0)\}$ is the direction vector of the geodesic.

Notice that this parametrization is not done by arc-length. And in many cases does not projects injectivelly under the universal covering map to $\mathbb{T}^3.$ But as we will be only concern about the closed geodesic as a knot, we do not inquire in a precise injective arc-length parametrization.

Recall a version of Lemma 2.2 in~\cite{HuiPurcell:GeomRod} that  characterizes a closed geodesic in $\mathbb{T}^3,$ without precising the parametrization of the geodesic.

\begin{lemma}\label{Lem:ClosedGeodesic3Torus}
A geodesic in three-dimensional Euclidean space projects under $\mathcal{P}$ to a closed geodesic in $\mathbb{T}^3,$ if and only if,  the coordinates of its direction vector generate a one dimensional vector space over $\mathbb{Q}$.
\end{lemma}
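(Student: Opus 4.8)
The plan is to reduce the geometric condition ``closed geodesic'' to the arithmetic condition ``there exists a nonzero $t_0$ with $t_0\vec{d}\in\mathbb{Z}^3$,'' and then to match the latter with the stated $\mathbb{Q}$-linear condition on the coordinates of $\vec{d}$. Write $\vec{d}=(d_1,d_2,d_3)$ and consider the line $\ell=\{\bar{p}+t\vec{d}\,|\,t\in\mathbb{R}\}$.

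First I would establish the geometric reduction. Since $\mathcal{P}$ is a Riemannian covering, the image $\mathcal{P}(\ell)$ is a geodesic, and it is a \emph{closed} geodesic precisely when the curve $t\mapsto\mathcal{P}(\bar{p}+t\vec{d})$ is periodic. If it is periodic there is a smallest $t_0>0$ with $\mathcal{P}(\bar{p}+t_0\vec{d})=\mathcal{P}(\bar{p})$, which by definition of $\mathcal{P}$ means exactly $t_0\vec{d}\in\mathbb{Z}^3$. Conversely, if $t_0\vec{d}\in\mathbb{Z}^3$ for some $t_0\neq0$, then $\bar{p}+(t+t_0)\vec{d}$ and $\bar{p}+t\vec{d}$ differ by a lattice vector for every $t$, so the projected curve is $t_0$-periodic and its image is compact, hence a closed geodesic. (If one prefers to avoid taking periodicity as the definition, the same conclusion follows from Kronecker's theorem: the closure of the one-parameter subgroup $\mathcal{P}(\{t\vec{d}\})$ is a subtorus whose dimension equals $\dim_{\mathbb{Q}}\mathrm{span}_{\mathbb{Q}}\{d_1,d_2,d_3\}$, and this closure is one-dimensional exactly when the image is an embedded circle.)

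Next I would verify the arithmetic equivalence: there exists $t_0\neq0$ with $t_0\vec{d}\in\mathbb{Z}^3$ if and only if the $\mathbb{Q}$-span of $\{d_1,d_2,d_3\}$ inside $\mathbb{R}$ is one-dimensional. For the forward direction, writing $t_0\vec{d}=\vec{n}\in\mathbb{Z}^3$ gives $d_i=n_i/t_0$, so every $d_i$ is a rational multiple of any fixed nonzero $d_{i_0}$; hence the span lies in $\mathbb{Q}\,d_{i_0}$ and, since $\vec{d}\neq0$, has dimension exactly one. For the converse, a one-dimensional span means $d_i=r_i\lambda$ for some fixed $\lambda\neq0$ and rationals $r_i$; clearing a common denominator $m$ so that $mr_i\in\mathbb{Z}$, the choice $t_0=m/\lambda$ satisfies $t_0d_i=mr_i\in\mathbb{Z}$, whence $t_0\vec{d}\in\mathbb{Z}^3$.

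I do not expect a serious obstacle, as both steps are elementary; the only point requiring care is the forward implication of the geometric reduction, namely arguing that a genuinely closed (compact, embedded) geodesic must return to its basepoint after finite parameter time rather than winding densely in a higher-dimensional subtorus. This is exactly where the Kronecker/equidistribution input enters if one does not adopt periodicity as the definition of a closed geodesic, and it is the content borrowed from Lemma~2.2 of~\cite{HuiPurcell:GeomRod}.
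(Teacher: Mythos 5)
Your proof is correct, but there is nothing in the paper to compare it against line by line: the paper does not prove this lemma, it simply recalls it as a rephrasing of Lemma~2.2 of~\cite{HuiPurcell:GeomRod} (stated there in terms of the direction vector being a real multiple of an integer vector, here restated as the coordinates spanning a one-dimensional $\mathbb{Q}$-vector space). Your two-step reduction --- closedness of the projected geodesic is equivalent to the existence of $t_0\neq 0$ with $t_0\vec{d}\in\mathbb{Z}^3$, and this in turn is equivalent to $\dim_{\mathbb{Q}}\mathrm{span}_{\mathbb{Q}}\{d_1,d_2,d_3\}=1$ --- is exactly the content of the cited result, and both steps are carried out correctly. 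You also correctly isolate the only genuinely non-elementary point, namely that a ``closed geodesic'' in the sense of a compact embedded circle forces periodicity; your appeal to Kronecker is sound, and the dimension formula you invoke is justified by rank--nullity applied to the $\mathbb{Q}$-linear map $\mathbb{Q}^3\to\mathbb{R}$, $\vec{n}\mapsto \vec{n}\cdot\vec{d}$, whose kernel cuts out the smallest rational subspace containing $\vec{d}$ and whose image is $\mathrm{span}_{\mathbb{Q}}\{d_1,d_2,d_3\}$. One detail worth adding, since the paper insists that a closed geodesic is an \emph{embedding} of $\mathbb{S}^1$: the set $\{t\in\mathbb{R} : t\vec{d}\in\mathbb{Z}^3\}$ is a subgroup of $\mathbb{R}$ that is discrete when $\vec{d}\neq 0$ (any nonzero element satisfies $|t|\,\|\vec{d}\|\geq 1$), hence cyclic with generator $t_0>0$; injectivity of the projected curve on $[0,t_0)$ then follows, so the periodic image is an embedded circle and not merely a closed curve.
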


\subsection{Totally geodesic $2$-torus inside the $3$-torus}
Recall that a Euclidean plane in the three-dimensional Euclidean space can be given by a parametric expression of the form:

$$\{\bar{p}+t\vec{d_1}+s\vec{d_2}\,|\,t,s\in\mathbb{R}\}$$

where $\bar{p}\in\mathbb{R}^3$ and $\vec{d_1},\vec{d_2}\in\mathbb{R}^3\setminus\{(0,0,0)\}$ are linearly independent direction vectors of the plane.

Recall the Lemma 2.6 in~\cite{Hui:GeomClassificationRod}  characterizes a totally geodesic $2$-torus in $\mathbb{T}^3.$

\begin{lemma}\label{Lem:TorusGeodesic3Torus}
A plane in three-dimensional Euclidean space projects under $\mathcal{P}$ to a totally geodesic $2$-torus in $\mathbb{T}^3,$ if and only if,  the plane has two linearly independent director vectors such that the coordinates of each one generate a one dimensional vector space over $\mathbb{Q}$.
\end{lemma}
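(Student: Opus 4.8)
The plan is to prove Theorem~\ref{Thm:MainHomeotTypeClassification3Rod} by treating each value of $k$ separately, since the geometric type of the complement changes fundamentally as $k$ grows, and by using the fact that $\PSL_3(\mathbb{Z})$ is exactly the mapping class group of $\mathbb{T}^3$ that lifts to affine automorphisms. The easy direction is the ``if'': given $A\in\PSL_3(\mathbb{Z})$ carrying the direction line of each $R_i$ to that of $R_i'$, I would first note that any such $A$ is realized by a self-homeomorphism $\Phi_A$ of $\mathbb{T}^3$ induced by a linear map of $\mathbb{R}^3$ fixing the lattice $\mathbb{Z}^3$. Because $A$ sends each direction subspace to the corresponding one, $\Phi_A$ sends the isotopy class of each closed geodesic $R_i$ (which by Lemma~\ref{Lem:ClosedGeodesic3Torus} is determined up to isotopy by its primitive direction vector, together with a translation that $\Phi_A$ can absorb) to a closed geodesic parallel to $R_i'$. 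After a further isotopy to correct the base points and to separate the images into disjoint geodesics, $\Phi_A$ restricts to a homeomorphism of the complements.

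For the ``only if'' direction I would proceed by induction on $k$. The case $k=1$ is the statement that a single closed geodesic complement in $\mathbb{T}^3$ remembers its direction line up to $\PSL_3(\mathbb{Z})$: here I would use that $\mathbb{T}^3\setminus R_1$ deformation retracts in a way that exposes $H_1$ and the peripheral structure, so that the homeomorphism induces an automorphism of $H_1(\mathbb{T}^3)\cong\mathbb{Z}^3$ sending the class of $R_1$ to that of $R_1'$ and the class of the meridian-longitude system accordingly; such an automorphism is by definition an element of $\PSL_3(\mathbb{Z})$ mapping one direction line to the other. For $k=2$ and $k=3$ I would invoke the JSJ/geometrization description of these complements from Hui--Purcell, which for linearly independent geodesics yields either a Seifert-fibered or (for $k=3$) a hyperbolic piece whose combinatorics encode precisely the configuration of direction subspaces; the homeomorphism then induces an isomorphism of these canonical decompositions, and reading off its action on $H_1(\mathbb{T}^3)=\mathbb{Z}^3$ again produces the required matrix $A$.

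The key technical device throughout is the homology argument: a homeomorphism $f\colon \mathbb{T}^3\setminus\{R_i\}\to\mathbb{T}^3\setminus\{R_i'\}$ induces an isomorphism on $H_1$, and I would show that this isomorphism is compatible with the inclusion into $\mathbb{T}^3$, hence descends to an automorphism $A_*$ of $H_1(\mathbb{T}^3)\cong\mathbb{Z}^3$, i.e.\ an element of $\GL_3(\mathbb{Z})$; passing to $\PSL_3(\mathbb{Z})$ handles the sign and orientation ambiguity. The crucial point is that $f$ matches the peripheral tori: the boundary torus corresponding to $R_i$ must map to the boundary torus corresponding to $R_i'$, and the direction vector of $R_i$ is recoverable as the homology class of the core curve of that peripheral torus inside $\mathbb{T}^3$. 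Establishing this peripheral matching is where the linear independence hypothesis and $k\leq 3$ are essential.

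The main obstacle I expect is precisely this peripheral-matching step when $k=2$, where the complement is not hyperbolic and so Mostow rigidity does not directly apply; here a homeomorphism need not respect boundary components a priori, and I must use the Seifert-fibered structure, the distinct homology classes of the cores, and the geometric decomposition of Hui--Purcell to pin down which boundary goes to which. A secondary subtlety is verifying that the ambiguity in recovering a \emph{line} rather than an oriented vector is exactly absorbed by passing from $\GL_3(\mathbb{Z})$ to $\PSL_3(\mathbb{Z})$, and that no extra choices (such as the translation part $\bar{x}_i$ of each geodesic) affect the homeomorphism type, which follows because translations of $\mathbb{T}^3$ are isotopic to the identity.
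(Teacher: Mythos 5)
Your proposal does not prove the statement under review. The statement is Lemma~\ref{Lem:TorusGeodesic3Torus}, an elementary characterization of which Euclidean planes project under $\mathcal{P}\colon\mathbb{R}^3\to\mathbb{T}^3$ to totally geodesic $2$-tori; what you wrote, from the first sentence on, is a proof sketch of Theorem~\ref{Thm:MainHomeotTypeClassification3Rod}, the $\PSL_3(\mathbb{Z})$-classification of complements of at most three closed geodesics. Nothing in your argument --- homology of link complements, peripheral tori, JSJ/geometrization, mapping classes of $\mathbb{T}^3$ --- bears on the lemma, which concerns only the flat geometry of the quotient $\mathbb{R}^3/\mathbb{Z}^3$ and requires none of that machinery. (For what it is worth, the paper itself does not prove this lemma either: it recalls it from Lemma~2.6 of~\cite{Hui:GeomClassificationRod}, the only difference being the freedom in the parametrization of the plane.)

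For completeness, here is the shape of the argument the lemma actually needs, in the spirit of Lemma~\ref{Lem:ClosedGeodesic3Torus}. If the plane $P$ has two linearly independent direction vectors whose coordinates each span a one-dimensional $\mathbb{Q}$-vector space, then each may be rescaled to lie in $\mathbb{Z}^3$, so the rank-two lattice $\Lambda$ they generate translates $P$ to itself; since $\mathcal{P}$ is a local isometry, $\mathcal{P}(P)$ is the isometric image of the compact flat torus $P/\Lambda$, hence an embedded totally geodesic $2$-torus (the $\mathbb{Z}^3$-translates of $P$ are parallel, so they either coincide with $P$ or miss it). Conversely, if $\mathcal{P}(P)$ is a $2$-torus, compactness of the image forces the stabilizer $\{v\in\mathbb{Z}^3 : P+v=P\}$ to have rank two, producing two linearly independent integer direction vectors of $P$; and the coordinates of any nonzero integer vector span a one-dimensional $\mathbb{Q}$-vector space. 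Your text would need to be replaced wholesale by an argument of this kind to count as a proof of the stated lemma.
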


Notice that the difference in the statement of Lemma 2.6 in~\cite{Hui:GeomClassificationRod} rely on the flexibility of the way we do not precise the parametrization of the plane.
\section{Homeomorphism type of three closed geodesics complement}

In this section we characterize the  homomorphism type of the complement of at most three closed geodesics that are linearly independent in terms of the action of $\PSL_3(\mathbb{Z})$ in at most three non-colinear points in the real projective plane  $\mathbb{P}^2$ (or the subspaces generated by the corresponding vector directions).

\begin{named}{Theorem~\ref{Thm:MainHomeotTypeClassification3Rod}}
Let $\{R_i\}_{i=1}^k$ and $\{R'_i\}_{i=1}^k$ be a pair of $k\leq 3$  disjoint linearly independent closed geodesics in the 3-torus. Then $$\mathbb{T}^3\setminus \{R_i\}_{i=1}^k\cong \mathbb{T}^3\setminus \{R'_i\}_{i=1}^k,$$ if and only if there exist $A\in \PSL_3(\mathbb{Z})$ such that maps the subspace generated by the direction vector of $R_i$ to the subspace generated by $R'_i$ for all $i.$
\end{named}
\begin{proof}
On one side, suppose there  exist $A\in \PSL_3(\mathbb{Z})$ such that sends the direction vector of $R_i$ to $R'_i$ for all $i.$ Because $A\in \PSL_3(\mathbb{Z})$ then the linear map associated in $\mathbb{R}^3$ passes to an homeomorphism $\bar{A}$ of the quotient space $\mathbb{T}^3.$ 

If $\bar{A}(R_1)\neq R'_1,$ then there is a  translation $\tau$ in $\mathbb{T}^3$ that make them coincide, and we have the case  $\mathbb{T}^3\setminus R_1\cong \mathbb{T}^3\setminus R'_1$ . In the case $k=2,$ if we  have that $\tau \circ \bar{A}(R_2)$ and $ R'_2,$ are linearly isotopic in $\mathbb{T}^3\setminus R'_1,$ so $\mathbb{T}^3\setminus \{R_i\}_{i=1}^2\cong \mathbb{T}^3\setminus \{R'_i\}_{i=1}^2.$

For the case $k=3$, if $\tau\circ \bar{A}(R_i)$ is not linear isotopic to $R'_i$ for $i=1,2$ in the complement of $\tau\circ \bar{A}(R_j)\cup R'_j$ with $i\neq j$ and $j=2,3,$ then there is a translation $\tau'$ in $\mathbb{T}^3$  with the same vector direction  as $\tau\circ \bar{A}(R_1)$  such that makes them linear isotopic in the corresponding complements.

Finally, up to disjoint local linear isotopies between  $\tau'\circ\tau\circ \bar{A}(R_i)$ and $R'_j$ for $i=1,2$ we found an homeomorphism of $\mathbb{T}^3$ sending  $(R_1, R_2,R_3)$ to $(R'_1, R'_2,R'_3),$ giving the wanted homeomorphism between the corresponding complements. 

On the other side, is enough to show that the homeomorphism $h$ between $\mathbb{T}^3\setminus\{R_i\}_{i=1}^k$ and $\mathbb{T}^3\setminus \{R'_i\}_{i=1}^k$ can be extended to a self-homeomorphism $\widehat h$ of $\mathbb{T}^3$ with $\widehat h(R_i)=R'_i.$ Notice that if $h$ maps $m_{i},$ the meridian of $R_i,$ to the meridian of $R'_i,$ then we can extend $h$ to the disks that they bound inside $\mathbb{T}^3$. Moreover, we can extend this homeomorphism along the core of $\mathcal{N}_{R_i},$ the normal neigborhood of $R_i,$ to $\mathcal{N}_{R'_i}.$ As the morphism in homology induced by the restriction of $h$ on $\partial\mathcal{N}_{R_i}$ is an isomorphism, then $H_1(h_{\mid\partial \mathcal{N}_{R_i}})(\Ker(H_1(i_{R_i})))=\Ker(H_1(i_{R'_i})),$ where $i_{R_i}$ is the inclusion of $\partial\mathcal{N}_{R_i}$ into $\mathbb{T}^3\setminus \{R_i\}_{i=1}^k,$ and analogous for $i_{R'_i}.$ As the image of a essential simple closed curve that represents the class of the meridian under the homeomorphism $h_{\mid\partial \mathcal{N}_{R_i}}$ has to be a essential simple closed curve inside $\partial \mathcal{N}_{R'_i},$ then we just need to prove the following claim\string:

\begin{claim}\label{Claim:Meridian}
$\Ker(H_1(i_{R_i}))$ is generated by a multiple of  $[m_{R_i}]$ in $H_1(\partial \mathcal{N}_{R_i}; \mathbb{R}).$
\end{claim}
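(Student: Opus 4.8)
The plan is to prove the claim by first showing that each meridian class dies in the complement, and then using a duality dimension count to see that nothing else does. Write $X := \mathbb{T}^3\setminus\{R_j\}_{j=1}^k$ for the (compact, orientable) complement, so that $\partial X = \bigsqcup_{j=1}^k \partial\mathcal{N}_{R_j}$ is a union of $k$ tori and $H_1(\partial X;\mathbb{R}) = \bigoplus_{j=1}^k H_1(\partial\mathcal{N}_{R_j};\mathbb{R})$ has dimension $2k$. Since $i_{R_i}$ is the restriction to one boundary component of the inclusion $\partial X \hookrightarrow X$, the space $\Ker(H_1(i_{R_i}))$ is exactly the intersection of the total kernel $K := \Ker\bigl(H_1(\partial X;\mathbb{R})\to H_1(X;\mathbb{R})\bigr)$ with the summand $H_1(\partial\mathcal{N}_{R_i};\mathbb{R})$.

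First I would show that $[m_{R_i}]=0$ in $H_1(X;\mathbb{R})$ for every $i$. Using that the $k\leq 3$ direction vectors are linearly independent, the directions of the geodesics other than $R_i$ span a rational plane $P_i\subset\mathbb{R}^3$ transverse to the direction of $R_i$ (when fewer than three geodesics are present, one completes the spanning set by an auxiliary primitive rational vector). By Lemma~\ref{Lem:TorusGeodesic3Torus}, a suitable translate of $P_i$ projects under $\mathcal{P}$ to a totally geodesic $2$-torus $T_i\subset\mathbb{T}^3$. Because the remaining geodesics run parallel to $T_i$, I can choose the translate so that $T_i$ is disjoint from all $R_j$ with $j\neq i$, while $T_i$ still meets $R_i$ transversely in finitely many points with nonzero algebraic intersection number $[T_i]\cdot[R_i]\neq 0$ (the latter because the direction of $R_i$ is transverse to $P_i$). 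Removing small open meridian disks around these intersection points turns $T_i$ into a properly embedded subsurface of $X$ whose oriented boundary is $([T_i]\cdot[R_i])\,[m_{R_i}]$ in $H_1(\partial\mathcal{N}_{R_i})$. Hence a nonzero multiple of $[m_{R_i}]$, and therefore $[m_{R_i}]$ itself over $\mathbb{R}$, lies in $K$.

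Finally I would invoke the \emph{half lives, half dies} consequence of Poincar\'e--Lefschetz duality for the compact orientable $3$-manifold $X$: the kernel $K$ has dimension exactly $\tfrac12\dim H_1(\partial X;\mathbb{R}) = k$. The $k$ classes $[m_{R_1}],\dots,[m_{R_k}]$ lie in $K$ by the previous step, and they are linearly independent because each is a nonzero element of a distinct direct summand $H_1(\partial\mathcal{N}_{R_j};\mathbb{R})$. They therefore form a basis of $K$, so $K = \bigoplus_{j=1}^k \mathbb{R}\,[m_{R_j}]$. Intersecting with the single summand gives $\Ker(H_1(i_{R_i})) = K\cap H_1(\partial\mathcal{N}_{R_i};\mathbb{R}) = \mathbb{R}\,[m_{R_i}]$, which is the assertion of the claim. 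I expect the main obstacle to be the geometric construction in the second step: producing the totally geodesic torus that simultaneously avoids the other geodesics and meets $R_i$ with nonzero algebraic intersection number, which is precisely where the linear independence of the (at most three) directions is used.
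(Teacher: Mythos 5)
Your proof is correct, and its geometric core coincides with the paper's: the totally geodesic torus $T_i$ you build from the plane spanned by the other direction vectors (translated to miss the other geodesics, and meeting $R_i$ with nonzero algebraic intersection number) is exactly the torus the paper uses, and your ``puncture it and read off the boundary'' argument is the same computation the paper packages as the Mayer--Vietoris connecting homomorphism $\widehat\delta([T_i])=n[m_{R_i}]$, $n\neq 0$. Where you genuinely diverge is in the second half, i.e.\ in showing the kernel is \emph{no larger} than the meridian line. The paper does this one boundary component at a time and elementarily: the longitude of $R_i$ maps to $[R_i]\neq 0$ in $H_1(\mathbb{T}^3;\mathbb{R})$, hence survives in $H_1$ of the complement, so $\Ker(H_1(i_{R_i}))$ is a proper subspace of the $2$-dimensional $H_1(\partial\mathcal{N}_{R_i};\mathbb{R})$ and must equal $\mathbb{R}[m_{R_i}]$. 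You instead invoke the ``half lives, half dies'' consequence of Poincar\'e--Lefschetz duality to pin the total kernel at dimension $k$ and then do a dimension count with the $k$ independent meridian classes. Both are valid; the paper's route needs nothing beyond $H_1(\mathbb{T}^3;\mathbb{R})$ and basic exactness, while yours imports a standard duality theorem but dispenses with any discussion of longitudes, and in exchange you are more careful than the paper on one point it glosses over (choosing the translate of the plane so that $T_i$ actually avoids the $R_j$, $j\neq i$). One cosmetic remark: the open complement $\mathbb{T}^3\setminus\{R_j\}_{j=1}^k$ is not compact; for the duality statement you should phrase everything in terms of the compact exterior $\mathbb{T}^3\setminus\bigcup_j \operatorname{int}(\mathcal{N}_{R_j})$, which is homotopy equivalent to it --- this is clearly what you mean, but the half-lives-half-dies theorem requires the compact manifold-with-boundary.
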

To proof the Claim \ref{Claim:Meridian} notice that the image of the homology class of a longitude in $\partial \mathcal{N}_{R_i}$ under $H_1(i_{R_i})$ is not trivial in $H_1(\mathbb{T}^3\setminus\{R_i\}_{i=1}^k; \mathbb{R})$ because $[R_i]$ is not trivial in $H_1(\mathbb{T}^3; \mathbb{R}).$ Then by considering the Mayer-Vietoris sequence in homology for the triad $(\mathbb{T}^3, \mathbb{T}^3\setminus\{R_i\}_{i=1}^k,\bigcup_{i=1}^k \mathcal{N}_{R_i}),$ we only need to show that there is an element in $H_2(\mathbb{T}^3)$ whose image  under $\widehat\delta$, the connecting morphism of the sequence, is a non trivial element of $ H_1(\partial \mathcal{N}_{R_i}).$
\vskip .2cm
Consider $T_i$ the torus in $\mathbb{T}^3$ parallel to the plane that contains the direction vectors of the $R_k$'s but not the vector direction of $R_i$.  Then $$\widehat\delta([T_i])=[\partial(T_i\cap \mathcal{N}_{R_i})]=n[m_{R_i}],\hspace{.2cm}\mbox{with} \hspace{.2cm}n\neq 0.$$
\end{proof}
Now we give an application of this result to a special family of links coming from periodic orbits of the geodesic flow on surfaces. We recall that the projective tangent bundle over $\mathbb{T}_*^2 $ a once-punctured torus with a flat metric, denoted by $PT(\mathbb{T}_*^2),$ is homeomorphic to the complement of a closed geodesic in the $3$-torus, which is the oriented trivial circle bundle over $\mathbb{T}_*^2.$ To every closed geodesics $\gamma$ on $\mathbb{T}_*^2,$  we associate the complement in $PT(\mathbb{T}_*^2 )$ of the periodic orbit of the geodesic flow associated to $\gamma$ which we denoted by $\widehat\gamma.$ Then we obtained a criterium to decide when a couple of two periodic orbits of the geodesic flow have not homeomorphic complement, which is an analog version of Theorem 1.6 in \cite{Rodriguez:OldLowerbound}, for a two link component case.

\begin{corollary}\label{Cor:HomeotTypePeriodicOrbitComplement}
     Let $\{\gamma_1,\gamma_2\}$ and $\{\eta_1,\eta_2\}$ a couple of two non-homotopic simple closed geodesics in $\mathbb{T}_*^2$ relative to a metric with nowhere positive curvature. Then 
    there is an homeomorphism, preserving the cusp coming from the puncture, between $PT(\mathbb{T}_*^2)\setminus\{\widehat\gamma_1,\widehat\gamma_2\}$ and  $ PT(\mathbb{T}_*^2)\setminus\{\widehat\eta_1,\widehat\eta_2\}$ if and only if there is a diﬀeomorphism $\phi$ on $\mathbb{T}_*^2$ such that $\phi(\gamma_i)$ is homotopic to $\eta_i$ for $i=1,2.$
\end{corollary}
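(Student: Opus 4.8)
The plan is to deduce Corollary~\ref{Cor:HomeotTypePeriodicOrbitComplement} from Theorem~\ref{Thm:MainHomeotTypeClassification3Rod} by dictionary translation, using the fact that $PT(\mathbb{T}^2_*)$ is identified with the complement of a single closed geodesic $R_3$ in $\mathbb{T}^3$ (the trivial circle bundle over $\mathbb{T}^2_*$, with the circle direction corresponding to, say, the direction vector $(0,0,1)$). Under this identification, a periodic orbit $\widehat\gamma$ of the geodesic flow sits inside $PT(\mathbb{T}^2_*)$ as a knot that is isotopic to a closed geodesic $R_\gamma$ in $\mathbb{T}^3$ lying in a horizontal torus, whose direction vector records the homotopy/slope class of $\gamma$. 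Thus a pair $\{\widehat\gamma_1,\widehat\gamma_2\}$ in $PT(\mathbb{T}^2_*)$ becomes a configuration of three closed geodesics $\{R_3, R_{\gamma_1}, R_{\gamma_2}\}$ in $\mathbb{T}^3$, and preserving the cusp from the puncture corresponds exactly to preserving the component $R_3$. So the corollary should reduce to the $k=3$ case of the theorem.

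\medskip

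First I would make the correspondence $\gamma\mapsto R_\gamma$ precise. Since the metric has nowhere positive curvature, each free homotopy class of a simple closed geodesic $\gamma$ on $\mathbb{T}^2_*$ determines a unique primitive homology direction, hence a well-defined line in $H_1(\mathbb{T}^2_*;\mathbb{R})$; I would use this to pin down the direction vector (the slope $[p:q]$) of $R_\gamma$ and verify that $R_3, R_{\gamma_1}, R_{\gamma_2}$ are linearly independent precisely when $\gamma_1,\gamma_2$ are non-homotopic (so the three direction vectors are non-coplanar). Next I would argue that a self-homeomorphism of $PT(\mathbb{T}^2_*)$ preserving the puncture-cusp extends to a self-homeomorphism of $\mathbb{T}^3$ fixing $R_3$ (setwise); this is where the identification $PT(\mathbb{T}^2_*)\cong\mathbb{T}^3\setminus R_3$ and the meridian argument from Claim~\ref{Claim:Meridian} get reused to show the homeomorphism respects the relevant slopes.

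\medskip

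For the forward direction I would take a homeomorphism $h\colon PT(\mathbb{T}^2_*)\setminus\{\widehat\gamma_1,\widehat\gamma_2\}\to PT(\mathbb{T}^2_*)\setminus\{\widehat\eta_1,\widehat\eta_2\}$ preserving the cusp, translate it into a homeomorphism of $\mathbb{T}^3\setminus\{R_3,R_{\gamma_1},R_{\gamma_2}\}$ onto $\mathbb{T}^3\setminus\{R_3,R_{\eta_1},R_{\eta_2}\}$ carrying the $R_3$-cusp to itself, and invoke the theorem to produce $A\in\PSL_3(\mathbb{Z})$ matching the three direction subspaces in order, with $A$ fixing the line of $R_3$. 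Because $A$ fixes the $(0,0,1)$-direction, it descends to an element of $\PSL_2(\mathbb{Z})$ acting on the slopes of the horizontal torus, i.e.\ on $H_1(\mathbb{T}^2_*)$; this mapping-class element is realized by a diffeomorphism $\phi$ of $\mathbb{T}^2_*$ with $\phi(\gamma_i)\simeq\eta_i$. Conversely, any such $\phi$ induces the corresponding $A$, fixing $R_3$, and the theorem's easy direction gives the homeomorphism, which one checks preserves the cusp. The main obstacle I anticipate is the bookkeeping in the step ``$A$ fixes $R_3$ hence descends to $\PSL_2(\mathbb{Z})$ and is geometrically realized by $\phi$'': one must confirm that the stabilizer of the $(0,0,1)$-line in $\PSL_3(\mathbb{Z})$ maps onto $\PGL_2(\mathbb{Z})\cong\mathrm{MCG}(\mathbb{T}^2_*)$ and that the ordered matching of slopes translates faithfully into $\phi(\gamma_i)\simeq\eta_i$, rather than merely an unordered or projectively ambiguous matching.
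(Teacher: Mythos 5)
Your proposal follows essentially the same route as the paper's proof: identify $PT(\mathbb{T}_*^2)$ with the complement in $\mathbb{T}^3$ of a closed geodesic in the fiber direction $(0,0,1)$, turn the pair of periodic orbits into a triple of closed geodesics so that Theorem~\ref{Thm:MainHomeotTypeClassification3Rod} applies, and translate between the stabilizer of the $(0,0,1)$-line in $\PSL_3(\mathbb{Z})$ and mapping classes of $\mathbb{T}_*^2$ acting on slopes, exactly as the paper does in both directions. The two subtleties you flag at the end --- that the stabilizer of the vertical line must surject onto the mapping class group of $\mathbb{T}_*^2$, and that the matching of cusps must be ordered rather than merely unordered (a homeomorphism permuting the cusps of $\widehat\gamma_1,\widehat\gamma_2$ would only give $\phi(\gamma_i)$ homotopic to $\eta_{\sigma(i)}$, and not every pair of slopes can be swapped by an element of $\PGL_2(\mathbb{Z})$) --- are genuine, and the paper's own proof passes over both of them silently.
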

\begin{proof}
    Suppose there exist a diﬀeomorphism $\phi$ on $\mathbb{T}_*^2$ such that $\phi(\gamma_i)$ is homotopic to $\eta_i$ for $i=1,2.$ Then we can adapt a flat metric on $\mathbb{T}_*^2$ such that the original simple closed geodesics are transversaly homotopic to the the ones in the flat metric. Notice that the linear isotopy class is determine by their slopes $[\gamma_i]$ and $[\eta_i],$ and that a  transveral homotopy lifts to an isotopy of the corresponding periodic orbits (see  \cite[Subsection 2.1]{Rodriguez:OldLowerbound}). 
    
    Moreover, the existence of $\phi$ induces an element $A\in\PSL_2(\mathbb{Z})$ such that $A[\gamma_i]=[\eta_i].$ Then naturally we can extend $A$ to $\bar{A}\in\PSL_3(\mathbb{Z})$ which fixes the last coordinate, and this induces an homeomorphism $\widehat{\phi}$ on $\mathbb{T}^3$ which fixes the closed geodesic with direction vector $(0,0,1).$ Up to composing with a translation parallel  to the direction $(0,0,1)$ we get that $\widehat{\phi}(\widehat\gamma_i)$ and $\widehat\eta_i$ are linearly isotopic in the complement and leaves invariant the cusp of $PT(\mathbb{T}_*^2)$ coming from the puncture of $\mathbb{T}_*^2$ . This gives the wanted homeomorphism between the corresponding complements.

    If we suppose that there is an homeomorphism $$\fun{h}{PT(\mathbb{T}_*^2)\setminus\{\widehat\gamma_1,\widehat\gamma_2\}}{PT(\mathbb{T}_*^2)\setminus\{\widehat\eta_1,\widehat\eta_2\}},$$ preserving the cusp coming from the puncture,  then by Theorem~\ref{Thm:MainHomeotTypeClassification3Rod} this extends to an homeomorphism of the $3$-torus which, up to isotopy, lifts to a linear map $\bar{A}\in\PSL_3(\mathbb{Z})$ which fixes the last coordinate because it preserves cusp coming from the fibers in $PT(\mathbb{T}_*^2).$ Then by forgetting the last coordinate on the linear map  $\bar{A}$ induces a linear $A\in\PSL_2(\mathbb{Z}),$ which induces an diffeomorphism of $\mathbb{T}_*^2$ sending $\gamma_i$ to a simple closed geodesic linearly isotopic to $\eta_i.$
\end{proof}

\section{Volume bounds for stratified closed geodesics complements}

\subsection{Farey graph}
Let $\mathbb{T}^2$ be the $2$-torus,  obtained by gluing identically the opposite sides of the unit square $[0, 1]\times[0, 1]$ in the 2-dimensional Euclidean plane,  and $\mathbb{T}_*^2$ the once-punctured $2$-torus with a non-complete flat metric. The Farey graph $\mathcal{F}$  is the metric graph obtained by taking the set of isotopy classes of simple closed geodesics as vertices (which is in correspondence with the rational points of the projective line $\mathbb{P}^1).$ We say that $[{p}:{q}],[{r}:{s}]\in \mathbb{P}^1$ two vertices are connected by an edge if the corresponding closed geodesics intersect only once (this does not depend on the representant of the class). For more facts about the Farey graph see Subsection 2.3 in~\cite{PurcelTaliRodriguez:ArithmeticModularMKnot}.

\begin{named}{Theorem~\ref{Thm:VolRod}}
There exist a universal constant $K_1\geq 1,$ such that for at least three  disjoint closed geodesics $R_1, R_2,...,R_n$ in the $3$-torus where $R_n$ has a direction vector $(0,0,1)$, for $i<n,$ $R_i$ is the image under $\mathcal{P}$ of the geodesic $\{\bar{x}_i+(tp_i,tq_i,s_i)|t\in\mathbb{R}\}\subset\mathbb{R}^3$ with  $0<s_i<s_j<1$ if $i<j,$ $[{p_i}:{q_i}]\neq [{p_{i+1}}:{q_{i+1}}]$, for $i=1,2,...,n-2,$ and $[{p_1}:{q_1}]\neq [{p_{n-1}}:{q_{n-1}}]$.  Then 
{\small{$$ \frac{1}{2K_1}  \sum_{i=1}^{n-1} d_\mathcal{F}\left([{p_i}:{q_i}],[{p_{i+1}}:{q_{i+1}}]\right) \leq\Vol(\mathbb{T}^3\setminus \{R_i\}_{i=1}^n)\leq v_8\sum_{i=1}^{n-1} d_\mathcal{F}\left([{p_i}:{q_i}],[{p_{i+1}}:{q_{i+1}}]\right),$$}}
where $v_8$ is the volume of a regular ideal octahedra and $[{p_n}:{q_n}]:=[{p_1}:{q_1}]$.
\end{named}
\begin{proof}
    The volume upper bound is obtained by removing in between the $ R_i$ and the $ R_{i+1} $ closed geodesic a finite ordered sequence of perpendicular closed geodesics to the vector direction $(0,0,1)$ whose corresponding vector direction corresponds to the vertices of the geodesic in Farey graph joining $[{p_i}:{q_i}]$ and $[{p_{i+1}}:{q_{i+1}}]$. Since Dehn filling does not increase volume by Thurston \cite[Proposition~6.5.2]{Thurston:Geom&TopOf3Mfd}, we have an upper bound  by estimating the volume of this new drilled manifold. By~\cite[Theorem~4.2]{PurcelTaliRodriguez:ArithmeticModularMKnot} the resulting  manifold has a complete hyperbolic structure obtained by gluing $\sum_{i=1}^{n-1} d_\mathcal{F}\left([{p_i}:{q_i}],[{p_{i+1}}:{q_{i+1}}]\right)$ regular ideal octahedra.

    For the volume lower bound, we used the fact that the image under $\mathcal{P}$ of $\{(R_i, z=s_i)\}_{i=1}^{n-1}$ minus the puncture coming from $R_n$ is a stratification of $R_i$ with $i=1,...,n-1$ inside $\mathbb{T}^3\setminus  R_n$ (see~\cite[Definition~1]{CRMY} ) and~\cite[Theorem~E]{CRMY} there is a positive constant $K_1\geq 1$ depending only on the topology of the once-punctured torus, such that:
    $$\frac{1}{2K_1} \left(\sum_{i=1}^{n-1} d_\mathcal{F}(P_{X_i},P_{Y_{i+1}}) \right)\leq Vol(\mathbb{T}^3\setminus \{R_i\}_{i=1}^n)$$
   where $\{   (P_{X_i},P_{Y_{i+1}}) \}_{i=1}^{n-1}$ are the Bers pants decomposition coming from the conformal structures of the stratifying surfaces, which is the projection of $z=s_i,$ minus the point $(0,0,s_i),$ under $\mathcal{P}$ . 
   
   Since any such pants decomposition contains only one loop the pants distance is the same as the Farey graph distance. Moreover, this loop is forced to be the rank one cusp induced by a simple closed curve of slope $[{p_i}:{q_i}]$. Therefore, we see that:
$$  d_\mathcal{F}(P_{X_i},P_{Y_{i+1}})= d_\mathcal{F}\left([{p_i}:{q_i}],[{p_{i+1}}:{q_{i+1}}]\right)$$
   
\end{proof}
\begin{corollary}\label{Cor:Vol3Rod}
There exist universal constant $K_1\geq 1,$ such that for $R_1, R_2,R_3$ linear independent closed geodesics such that de direction vector of $R_3$ is orthogonal to the plane generated by the direction vectors of $R_2$ and $R_1$. Then 
$$\frac{1}{2K_1}  d_\mathcal{F}\left([{p_1}:{q_1}],[{p_{2}}:{q_{2}}]\right) \leq \Vol(\mathbb{T}^3\setminus \{R_i\}_{i=1}^3)\leq 2 v_8 d_\mathcal{F}\left([{p_1}:{q_1}],[{p_{2}}:{q_{2}}]\right),$$

where $[{p_i}:{q_i}]$ is the direction vector of $R_i$, form $i=1,2$ relative to an orthogonal basis of the plane generated by the direction vectors of $R_2$ and $R_1$ and $v_8$ is the volume of a regular ideal octahedra.
\end{corollary}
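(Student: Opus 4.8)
The plan is to derive Corollary \ref{Cor:Vol3Rod} as the special case $n = 3$ of Theorem \ref{Thm:VolRod}, with the orthogonality hypothesis serving to place the configuration into the normalized form that theorem requires. First I would verify the hypotheses. The three geodesics $R_1, R_2, R_3$ are disjoint and linearly independent, and since $R_3$ is orthogonal to $P := \mathrm{span}(\vec d_1, \vec d_2)$, the direction vectors $\vec d_1, \vec d_2$ span $P$ and are in particular linearly independent, so the slopes satisfy $[p_1:q_1] \neq [p_2:q_2]$. For $n = 3$ the two genericity conditions of Theorem \ref{Thm:VolRod}, namely $[p_i:q_i] \neq [p_{i+1}:q_{i+1}]$ for $i = 1, \dots, n-2$ and $[p_1:q_1] \neq [p_{n-1}:q_{n-1}]$, both collapse to exactly this one inequality.

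Next I would normalize coordinates. Because hyperbolic volume is a topological invariant (Mostow rigidity) and, by Theorem \ref{Thm:MainHomeotTypeClassification3Rod}, the homeomorphism type of the complement depends only on the triple of direction subspaces, I would identify $\mathbb{T}^3 \setminus \{R_i\}$ with the complement of the standard configuration in which $R_3$ has direction $(0,0,1)$ and $R_1, R_2$ lie in the parallel tori $z = s_i$. The orthogonality $R_3 \perp P$ is precisely what lets $P$ play the role of the horizontal torus and an orthogonal basis of $P$ play the role of the standard basis, so that the intrinsic slopes $[p_i:q_i]$ of the projected simple closed geodesics $\gamma_i = \mathcal{P}(R_i) \subset \mathbb{T}^2_P$ are the ones appearing in the theorem. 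Since $R_1, R_2$ are disjoint they occupy distinct heights along the $R_3$-direction, which I would relabel so that $0 < s_1 < s_2 < 1$.

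With the configuration normalized, I would invoke Theorem \ref{Thm:VolRod} and simplify the cyclic sum. Using the convention $[p_3:q_3] := [p_1:q_1]$ together with the symmetry of $d_\mathcal{F}$,
$$\sum_{i=1}^{2} d_\mathcal{F}([p_i:q_i],[p_{i+1}:q_{i+1}]) = d_\mathcal{F}([p_1:q_1],[p_2:q_2]) + d_\mathcal{F}([p_2:q_2],[p_1:q_1]) = 2\, d_\mathcal{F}([p_1:q_1],[p_2:q_2]).$$
Substituting into the two bounds of the theorem yields the upper bound $v_8 \cdot 2\, d_\mathcal{F} = 2 v_8\, d_\mathcal{F}([p_1:q_1],[p_2:q_2])$ and the lower bound $\tfrac{1}{2K_1}\cdot 2\, d_\mathcal{F} = \tfrac{1}{K_1}\, d_\mathcal{F}([p_1:q_1],[p_2:q_2])$, the latter being in fact slightly stronger than the claimed $\tfrac{1}{2K_1}\, d_\mathcal{F}$, which therefore follows a fortiori.

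The main obstacle I anticipate is the normalization step: a Euclidean rotation sending $\vec d_3 \mapsto (0,0,1)$ and $P \mapsto \{z=0\}$ need not preserve the lattice $\mathbb{Z}^3$, so one cannot merely conjugate by an element of $\PSL_3(\mathbb{Z})$ and must instead argue that both compared quantities—the hyperbolic volume and the Farey distance of the curve pair on the once-punctured torus $\mathbb{T}^2_{*,P}$—are intrinsic invariants of the complement, independent of the ambient flat structure. Concretely, I would justify this by realizing $\mathbb{T}^3 \setminus R_3$ as $PT(\mathbb{T}^2_{*,P})$ and $R_1, R_2$ as the periodic orbits $\widehat\gamma_1, \widehat\gamma_2$, so that the lower-bound argument via the stratification result \cite[Theorem~E]{CRMY} and the upper-bound argument via the octahedral decomposition of \cite[Theorem~4.2]{PurcelTaliRodriguez:ArithmeticModularMKnot} apply verbatim to this intrinsic picture, exactly as in the proof of Theorem \ref{Thm:VolRod}.
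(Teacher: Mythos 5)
Your main route---normalize so that $R_3$ is vertical and $R_1,R_2$ horizontal via Theorem~\ref{Thm:MainHomeotTypeClassification3Rod}, then apply Theorem~\ref{Thm:VolRod} with $n=3$---is exactly the paper's proof, and your bookkeeping is right: the cyclic sum equals $2\,d_\mathcal{F}([p_1:q_1],[p_2:q_2])$, giving the upper bound $2v_8\,d_\mathcal{F}$ and in fact the stronger lower bound $\frac{1}{K_1}d_\mathcal{F}$. You also put your finger on the correct crux, namely the normalization step. But that step is a genuine gap, not just an anticipated obstacle, and your proposed workaround does not close it. Write $\vec d_i$ for the direction of $R_i$, $P=\operatorname{span}(\vec d_1,\vec d_2)$, $L=P\cap\mathbb{Z}^3$, and $N=\vec d_3\cdot\vec d_3$. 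If some $A\in\PSL_3(\mathbb{Z})$ sent $\langle\vec d_3\rangle$ to $\langle(0,0,1)\rangle$ and $\vec d_1,\vec d_2$ to horizontal vectors, then $A$ would carry the sublattice $L\oplus\mathbb{Z}\vec d_3$ onto $\bigl(\{z=0\}\cap\mathbb{Z}^3\bigr)\oplus\mathbb{Z}(0,0,1)=\mathbb{Z}^3$; but a lattice automorphism preserves indices, and $[\mathbb{Z}^3:L\oplus\mathbb{Z}\vec d_3]=N$ (the surjection $v\mapsto v\cdot\vec d_3$ from $\mathbb{Z}^3$ to $\mathbb{Z}$ has kernel $L$ and sends $\vec d_3$ to $N$), so this forces $N=1$, i.e.\ $\vec d_3\in\{\pm(1,0,0),\pm(0,1,0),\pm(0,0,1)\}$. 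Concretely, take $\vec d_1=(1,0,0)$, $\vec d_2=(0,1,-1)$, $\vec d_3=(0,1,1)$, which satisfies every hypothesis of Corollary~\ref{Cor:Vol3Rod}: any integer matrix $A$ with $A\vec d_3=\pm(0,0,1)$ and $A\vec d_1,A\vec d_2$ horizontal would need second column $\tfrac12(p_2,q_2,\pm1)$, which is not integral. By Theorem~\ref{Thm:MainHomeotTypeClassification3Rod} itself, this complement is therefore not homeomorphic to \emph{any} configuration covered by Theorem~\ref{Thm:VolRod}, so no normalization, linear or topological, can reduce to that theorem.

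Your intrinsic fallback fails for the same arithmetic reason. When $N>1$ the tori parallel to $P$ each meet $R_3$ in $N$ points (in the example above, the image of $P$ meets $R_3$ at $(0,0,0)$ and $(0,\tfrac12,\tfrac12)$), so the stratifying surfaces are $N$-punctured tori, not once-punctured ones; equivalently, $\mathbb{T}^3\setminus R_3$ is a circle bundle over a once-punctured torus $P/\Lambda$ with $[\Lambda:L]=N$, and the curves $R_i$ need not be periodic orbits $\widehat\gamma_i$ there but may multiply cover their projections ($R_2$ double covers its image in the example). Hence neither \cite[Theorem~E]{CRMY}, which you would invoke for the Farey/pants distance on the once-punctured torus, nor the octahedral decomposition of \cite[Theorem~4.2]{PurcelTaliRodriguez:ArithmeticModularMKnot} applies ``verbatim'' to this picture. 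For what it is worth, the paper's own proof has the identical gap: it asserts an ``orthogonal matrix in $\PSL_3(\mathbb{Z})$'' performing the normalization, but integral orthogonal matrices are signed permutation matrices, so such a matrix exists only when $N=1$. In short, your argument (like the paper's) establishes the corollary only when $\vec d_3$ is a signed coordinate vector and $P$ is the corresponding coordinate plane; the general orthogonal case needs either an additional hypothesis or a genuinely new argument, and one must also check that the stated Farey distance is even well defined, since it depends on the choice of the (non-lattice) orthogonal basis of $P$.
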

\begin{proof}
    By Theorem~\ref{Thm:MainHomeotTypeClassification3Rod} we can apply an an ortogonal matrix in $\PSL_3(\mathbb{Z})$ such that sends the direction vector subespaces of $R_3$ to $(0,0,1)$ and the other direction vectors subespaces corresponding to $R_2$ and $R_1$ to two direction vectors with last coordinate equal to zero. 
    
    Then after choosing an ortogonal basis for the plane generated by the direction vectors corresponding to $R_2$ and $R_1,$ we obtain the slope of each direction vector of  $R_i$ as $[{p_i}:{q_i}]$. 
    
    Finally, by Theorem~\ref{Thm:VolRod} we get the wanted result for the case with three closed geodesics.
\end{proof}

As in the previous section, we give an application to a special family of links coming from periodic orbits of the geodesic flow on the once-punctured torus $\mathbb{T}_*^2$ with a flat metric. Notice that a closed geodesic $\gamma_i$ in this case is given by the parametrization $\{(x_i,y_i)+t(p_i,q_i)\,|\,t\in\mathbb{R}\},$ then the periodic orbit $\widehat\gamma$ is given by the parametrization $\{(x_i,y_i,\arctan\left(\frac{p_i}{q_i}\right) )+t(p_i,q_i,0)\,|\,t\in\mathbb{R}\}.$ Consequently, given $\Gamma$ any finite family of nonparallel closed geodesics on the once-punctured torus $\mathbb{T}_*^2$ with a flat metric, induces an ordering by calculating $\frac{-\pi}{2}\leq\arctan\left(\frac{p_i}{q_i}\right)<\frac{\pi}{2},$ and then we explicit and improve the right inequality of Theorem B in \cite{CRMY} as follows:

\begin{corollary}\label{Cor:VolPeriodicOrbits}
Let $\Gamma$ is a filling collection of nonparallel essential simple closed curves in minimal position on $\mathbb{T}_*^2.$ Then, there exists an ordering in $\Gamma=\{\gamma_i\}^n_{i=1}$ and a universal constant  $K_1\geq 1,$
such that:

{\footnotesize $$\frac{1}{2K_1} \left(\sum_{i=1}^{n} d_\mathcal{F}\left([{p_i}:{q_i}],[{p_{i+1}}:{q_{i+1}}]\right) \right)\leq \Vol\left(PT(\mathbb{T}_*^2)\setminus\widehat\Gamma\right)\leq 2 v_8 \left(\sum_{i=1}^{n} d_\mathcal{F}\left([{p_i}:{q_i}],[{p_{i+1}}:{q_{i+1}}]\right) \right),$$}

where  $[{p_i}:{q_i}]$ is the slope associated to $\gamma_i,$ $v_8$ is the volume of a regular ideal octahedra and $\gamma_{n+1}:=\gamma_1$.
\end{corollary}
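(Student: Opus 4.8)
The plan is to read the statement off from Theorem~\ref{Thm:VolRod} via the identification of the projective tangent bundle with a geodesic complement in the $3$-torus already recalled above. Concretely, $PT(\mathbb{T}_*^2)$ is the complement in $\mathbb{T}^3$ of the closed geodesic $R_{n+1}$ with direction vector $(0,0,1)$, the fiber over the puncture. Under this identification the periodic orbit $\widehat\gamma_i$, carried by the parametrization $\{(x_i,y_i,\arctan(p_i/q_i))+t(p_i,q_i,0)\mid t\in\mathbb{R}\}$, is exactly a horizontal closed geodesic $R_i$ with direction $(p_i,q_i,0)$ sitting at height $s_i=\arctan(p_i/q_i)$. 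Hence $PT(\mathbb{T}_*^2)\setminus\widehat\Gamma$ is homeomorphic to $\mathbb{T}^3\setminus\{R_i\}_{i=1}^{n+1}$, and the whole problem reduces to checking that this configuration meets the hypotheses of Theorem~\ref{Thm:VolRod}.

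First I would install the ordering. Since the curves in $\Gamma$ are nonparallel, the slopes $[p_i:q_i]$ are pairwise distinct, so the angles $\arctan(p_i/q_i)\in[-\tfrac{\pi}{2},\tfrac{\pi}{2})$ are distinct and determine a strict cyclic order; after an orientation-preserving affine reparametrization of the fiber coordinate sending $[-\tfrac{\pi}{2},\tfrac{\pi}{2})$ into $(0,1)$ I may relabel so that $0<s_1<\cdots<s_n<1$, which is precisely the height hypothesis of Theorem~\ref{Thm:VolRod}. Because the slopes are distinct the orbits lie at distinct heights and are therefore pairwise disjoint, and they are disjoint from $R_{n+1}$ by construction (they live in $PT(\mathbb{T}_*^2)=\mathbb{T}^3\setminus R_{n+1}$); nonparallelism gives $[p_i:q_i]\neq[p_{i+1}:q_{i+1}]$ for all consecutive indices, including the wraparound $[p_{n+1}:q_{n+1}]:=[p_1:q_1]$. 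Thus every hypothesis of the theorem holds for $\{R_i\}_{i=1}^{n+1}$.

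Applying Theorem~\ref{Thm:VolRod} with its ``$n$'' equal to our $n+1$ then yields
$$\frac{1}{2K_1}\sum_{i=1}^{n}d_\mathcal{F}\left([p_i:q_i],[p_{i+1}:q_{i+1}]\right)\leq\Vol\left(PT(\mathbb{T}_*^2)\setminus\widehat\Gamma\right)\leq v_8\sum_{i=1}^{n}d_\mathcal{F}\left([p_i:q_i],[p_{i+1}:q_{i+1}]\right),$$
with the convention $\gamma_{n+1}:=\gamma_1$. The left-hand side is already the asserted lower bound, and the right-hand side is the same cyclic sum with sharp constant $v_8$; since $v_8\le 2v_8$, the stated upper bound with $2v_8$ follows a fortiori (it is a deliberate weakening, exact only in the two-curve case already recorded as the cyclic doubling in Corollary~\ref{Cor:Vol3Rod}).

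The main obstacle I anticipate is not the volume estimate, which is immediate once the reduction is set up, but verifying the two points that make the reduction legitimate. I must check (i) that the filling hypothesis on $\Gamma$ is exactly what forces $\mathbb{T}^3\setminus\{R_i\}_{i=1}^{n+1}$ to be hyperbolic, so that the volume is defined and the geometric input behind Theorem~\ref{Thm:VolRod} (the regular ideal octahedra and the lower bound of~\cite{CRMY}) applies; and (ii) that the slope $[p_i:q_i]$ read from the flat structure on $\mathbb{T}_*^2$ coincides, after the $PT$-identification, with the slope used to compute $d_\mathcal{F}$, so that the Bers pants decompositions appearing in the lower bound transport correctly through the bundle picture. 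Both are bookkeeping consequences of the explicit parametrizations, but they are where the argument has to be pinned down.
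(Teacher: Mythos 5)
Your proposal is correct and is essentially the paper's own argument: the corollary is stated with no separate proof precisely because the preceding paragraph already sets up your exact reduction, namely identifying $PT(\mathbb{T}_*^2)\setminus\widehat\Gamma$ with the complement in $\mathbb{T}^3$ of the horizontal geodesics at heights $\arctan\left(\frac{p_i}{q_i}\right)$ together with the vertical $(0,0,1)$-geodesic, ordering by those heights, and applying Theorem~\ref{Thm:VolRod}. Your observation that the theorem in fact yields the sharper constant $v_8$, so the stated bound with $2v_8$ holds a fortiori, is a correct reading of the cyclic sum and does not conflict with the paper.
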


\section{Further questions}
Our results on the geometry and topology of closed geodesics complements suggest various questions worthy of further exploration, such as the following.
\begin{question}
    Does there exist a more general characterization of the topological type of closed geodesics complement in $\mathbb{T}^3$?
\end{question}
A problem appears, for example when there are two  closed geodesics whose directions vectors are linearly dependent and there is another closed geodesic intersecting the annuli between them, because one can obtain an homeomorphism coming by doing annular Dehn filling (see Section 4 in~\cite{HuiPurcellDo:VolRodPackings}). Other possible direction of generalization is for the complements of closed geodesics in $\mathbb{T}^n$ as is stated for the one component case in~\cite[Proposition~2.5]{HuiPurcellDo:VolRodPackings}
\begin{question}
    Do there exist explicit sharp volume bounds for three linear independent closed geodesics complements, such that any pair is orthogonal in between, in terms of same combinatorial invariant of the closed geodesics?
\end{question}

Even though Do, Hui and Purcell, gave in~\cite{HuiPurcellDo:VolRodPackings} explicit volume bounds for all closed geodesics complement, these are loose in general and are not in terms of the same combinatorial invariant of the closed geodesics.

\begin{question}
    For the lower bound in~\cite[Theorem~5.7]{HuiPurcellDo:VolRodPackings} and our Theorem~\ref{Thm:VolRod}, which is sharper?
\end{question}

\bibliography{references}

\providecommand{\bysame}{\leavevmode\hbox to3em{\hrulefill}\thinspace}
\providecommand{\MR}{\relax\ifhmode\unskip\space\fi MR }
\providecommand{\MRhref}[2]{%
  \href{http://www.ams.org/mathscinet-getitem?mr=#1}{#2}
}
\providecommand{\href}[2]{#2}
\begin{thebibliography}{1}

\bibitem{CRMY}
Tommaso Cremaschi, Jos\'{e}~Andr\'{e}s Rodr\'{\i}guez-Migueles, and Andrew Yarmola, \emph{On volumes and filling collections of multicurves}, J. Topol. \textbf{15} (2022), no.~3, 1107--1153. \MR{4442684}

\bibitem{HuiPurcellDo:VolRodPackings}
Norman Do, Jessica~S. Purcell, and Connie On~Yu Hui, \emph{Volume bounds for hyperbolic rod complements in the 3-torus}, 2409.02357v2, 2022.

\bibitem{Hui:GeomClassificationRod}
Connie On~Yu Hui, \emph{A geometric classification of rod complements in the 3-torus}, Proc. Amer. Math. Soc. \textbf{56} (2024), no.~4, 1291--1309.

\bibitem{HuiPurcell:GeomRod}
Connie On~Yu Hui and Jessica~S. Purcell, \emph{On the geometry of rod packings in the 3-torus}, Bulletin of the London Mathematical Society \textbf{56} (2024), no.~4, 1291--1309.

\bibitem{PurcelTaliRodriguez:ArithmeticModularMKnot}
Tali Pinsky, Jessica Purcell, and Jos\'{e}~Andr\'{e}s Rodr\'{\i}guez-Migueles, \emph{Arithmetic modular links}, Pacific Journal of Mathematics \textbf{327} (2023), no.~2, 337--358.

\bibitem{Rodriguez:OldLowerbound}
J.~Andres Rodriguez-Migueles, \emph{A lower bound for the volumes of complements of periodic geodesics}, J. London Math. Soc. \textbf{2} (2020), no.~3, 1--27.

\bibitem{Thurston:Geom&TopOf3Mfd}
William~P. Thurston, \emph{The geometry and topology of three-manifolds}, Princeton Univ. Math. Dept. Notes, Available at \url{https://www.msri.org/communications/books/gt3m}, 1979.

\end{thebibliography}
\bibliographystyle{amsplain}
\end{document}